\newcommand{\R}{{\mathbb R}}
\newcommand{\D}{\mathcal{D}}
\newcommand\norm[1]{\left\lVert#1\right\rVert}
\newcommand{\erf}{\mathrm{erf}\,}
\DeclarePairedDelimiter\abs{\lvert}{\rvert}
\date{\today}
\newtheorem{theorem}{Theorem}[section]
\newtheorem{lemma}[theorem]{Lemma}
\theoremstyle{definition}
\newtheorem{Definition}[theorem]{Definition}
\theoremstyle{remark}
\numberwithin{equation}{section}
\begin{document}

\title[More generalized groundwater model]
{More generalized groundwater model with space-time Caputo Fabrizio fractional differentiation}

\author[Djida]{Jean-Daniel Djida}
\author[Atangana]{Abdon Atangana}

\address[Djida]{African Institute for Mathematical Sciences (AIMS), P.O. Box 608, Limbe Crystal Gardens, South West Region, Cameroon. }
\email[Djida]{jeandaniel.djida@aims-cameroon.org}

\address[Atangana]{Institute for Groundwater Studies, Faculty of Natural and Agricultural Sciences, University of the Free State, 9300, Bloemfontein, South Africa.}
\email[Atangana]{abdonatangana@yahoo.fr}

\keywords{ Groundwater flow equation. Caputo-Fabrizio
fractional derivative. Existence and uniqueness}

\begin{abstract}
We prove existence and uniqueness of the flow of water within a confined aquifer with fractional diffusion in space and fractional time derivative in the sense of Caputo-Fabrizio. Our main method is the fixed-point theorem. We propose the numerical approximation of the model. The Crank-Nicolson numerical scheme was used to solve the modified model. In order to check the effectiveness of the model, stability analysis of the numerical scheme for the new model are presented.  
\end{abstract}

\maketitle

\section{Introduction}
The flow of subsurface water within the geological formation is an interested physical problem that has attracted attention of many scholars around the world due to its complexities. The first model was suggested by Theis, where he adapted the model of heat flow in an homogeneous media. Although his equation has been extensively used in the circle of geohydrology but the comparison of mathematical equation with experimental data show a disagreement. However although the collection of experimental data could be bias but still the nature revealed the true behind a physical problem. It is then believed that the mathematical equation suggested by their needs to be reverified. To do this, the time derivation in the Theis equation was replaced several time by the concept of fraction differentiation see \cite{Cloot}. Nevetheless the kernel used in the concept of fractional differentiation is the well-known power law $x^{-\alpha}$ which has a singularity as $x \to 0$. However change in water level is observed in the vicinity of the borehole which is  consider here as origin. But at the origin with the power we have a singularity.To improve this, the kernel was recently replaced by the exponential decay law which does not have singularity event at the origin~\label{Fabrizio}.\medskip

The aim of this work is to revisit the groundwater flow model with the Caputo-Fabrizio derivative. For this purpose, the problem of flow of water within a confined aquifer.\medskip

Very recently a new model of groundwater flow in a confined aquifer has been proposed by A. Atangana and Alkahtani \cite{Atangana2015} in order to take into account the effect of different scales in the annular space as \medskip
\begin{equation}
F_{2} - F_{1} = {}^{\textsc{CF}}\D_t^{\alpha}V(r,t),\quad 0 < \alpha \leq 1, 
\end{equation}
where, $F_{1}$ and $F_{2}$ are inflow and outflow rate, respectively, and $V(r,t)$ is the volume. ${}^{\textsc{CF}}\D_t^{\alpha}$ denotes the Caputo-Fabrizio fractional derivative given by~ \eqref{eq:Caputo-Fabrizio_Derivative}. The variation for the radial flow through into a well for confined homogeneous and isotropic aquifer is given by
\begin{equation}\label{flowvariation}
F_{2} - F_{1} = K\bigg[ \partial_r h(r,t) + \partial^{2}_r h(r,t) \bigg] \big( 2\pi (r + dr)b \big) - K\bigg[ \partial_r h(r,t) \bigg] \big( 2\pi r b \big), 
\end{equation}
where $r$ is the radio of the annular cylinder, $b$ is the thickness of the confined aquifer, and $K$ is the hydraulic conductivity of the aquifer \cite{Atangana2015}.\medskip

The change of volume at different scales is proportional to the change in hydraulic head at different scales in the confined aquifer. This change has been presented by A.Atangana and Alkahtani  \cite{Atangana2015} as 
\[
{}^{\textsc{CF}}\D_t^{\alpha}V(t,r) = 2\pi r dr {}^{\textsc{CF}}\D_t^{\alpha}h(t,r).
\]
By putting all together one can get the equation in the form
\begin{equation}\label{Abdon_model}
\frac{S}{T_{r}} {}^{\textsc{CF}}\D_t^{\alpha}h(t,r)= \frac{1}{r}\partial_r h(r,t) + \partial^{2}_r h(r,t), 
\end{equation}
where $T$ is the transmissivity of the aquifer and $S$ the Storativity.\medskip

The above equation~\eqref{Abdon_model} which describes the flow of water at different scales in time within the confined aquifer has been study analytically and numerically \cite{Atangana2015}. This model which is fractional in time can be fractional in space as well. This could useful to get the global picture of the behaviour of the of water within the confined homogeneous aquifer. As one of our aim is to enhance the mathematical model describing the flow of water in a confined aquifer, we proposed the model given by \eqref{prob}. This model is based on the Caputo-Fabrizio derivative in time, and fractional Laplacian in space.\medskip

This paper is devoted to study the existence, uniqueness and stability of of solution to the problem of ground water flow within a confined aquifer in different scales in time and in space.\medskip

Let $\Omega$ be the domain of the confined aquifer and an open and bounded subset of $\R^{n}~(n \geq 1)$, with boundary $\partial \Omega$. Given $\alpha,\in (0,1)$,  the specific equation of groundwater flow we study is 
\begin{equation}  \label{prob}
\begin{cases}
\partial_t^{\alpha} h = \vartheta(r)\partial _{r}h + \gamma \mathcal{L}^{\alpha} h, \\
h_{0} = h(r,0) \quad \text{on} \quad \partial \Omega \times (0,T),
\end{cases}
\end{equation}
where the initial datum $h_{0} = h(r,0)$  and we seek $h(r,t)$ to be the head. $\vartheta(r) = \frac{T_{r}}{rS}$ is given in meters per second. Physical considerations restrict $\alpha \in (0,1)$. Notice that, $h$ is a function of $r$ and the time $t > 0$, and the coefficient $\gamma = T_{r}S^{-1}$ is given in meters $\alpha$ per second. For the case $\alpha = 1$ then $\gamma$ is given in meters$^{2}$ per second.\medskip

The problem we consider uses the derivative with fractional order in time \eqref{eq:Caputo-Fabrizio_Derivative} as well as the fractional Laplacian in the sense of Caputo-Fabrizio \eqref{Caputo_Fractional_Laplacian}.

$\mathcal{L}^{\alpha}$ denotes the Caputo-Fabrizio fractional space derivative \eqref{Caputo-Fabrizio_Derivative} of order $\alpha \in (0,1)$. The Equation \eqref{prob} is then a parabolic equation which is nonlocal.\medskip

Notice that for $\alpha=1$, the parabolic problem given by the Equation~\eqref{prob} becomes a classical groundwater flow problem. The pressure of water is related to the density via a nonlocal operator. In our case we consider a pressure which takes into account long range interactions effects. The space derivative which involve the exponential decay--Caputo-Fabrizio type has slimly the form of the fading kernel.\medskip

This paper is organized as follows: In Section \ref{Sec:2}, we recall some properties of Caputo-Fabrizio fractional derivatives and Laplacian. Existence and uniqueness for the problem \eqref{prob} are discussed in Section \ref{Sec:3}. Furthermore, in Section \ref{Sec:4}, numerical analysis of the new groundwater model is presented. Finally \ref{Sec:5} is dedicated to our perspectives and conclusions.

\section{Properties of Caputo-Fabrizio fractional derivatives and Laplacian}\label{Sec:2}

In the following we recall the definitions of fractional derivative and integral in the sense of Caputo-Fabrizio \cite{Fabrizio, Losada} that will be useful.
\begin{Definition}\label{Caputo-Fabrizio_Derivative}
Let $h \in (0,T) \times H^{1}_{0}(\Omega)$, $\alpha \in (0,1)$. The Caputo-Fabrizio derivative of order $\alpha$ of a function $h$ is defined by
\begin{equation}\label{eq:Caputo-Fabrizio_Derivative}
^{\textsc{CF}}\D_t^{\alpha}h(r,t)= \dfrac{B(\alpha)}{1-\alpha}\int_0^t \partial_{s}h(r,s) \exp\bigg[-\frac{\alpha}{1-\alpha}(t-s)\bigg] ds, \qquad t > 0,
\end{equation}
where 
\[
B(\alpha) = 1 - \alpha + \frac{\alpha}{\Gamma(\alpha)},
\]
is a normalization constant depending on $\alpha$ such that $B(0) = B(1) = 1$. 
\end{Definition}
According to the fractional derivative in the Caputo-Fabrizio sense \eqref{eq:Caputo-Fabrizio_Derivative}, contrary to the old definition \cite{Ivan2015}, the new kernel has no singularity for $t = s$, but still need more regularity. Despite some weakness of this derivative, it has been shown recently by many researchers that this new derivative can be used with great success for those problems described by Caputo and Fabrizio \cite{Abdon2}. \medskip

The fractional integral associated to the  Caputo-Fabrizio derivative \eqref{Caputo_Fabrizio_Integral} is given as
\begin{equation} \label{Caputo_Fabrizio_Integral}
I_{t}^{\alpha}h(r,t) = \frac{2(1-\alpha)}{(2-\alpha) B(\alpha)}h(r,t) + \frac{2 \alpha}{(2-\alpha) B(\alpha)} \int_{0}^{t} h(r,s) ds, ~~ t > 0.
\end{equation}

\begin{Definition} \label{Def_Caputo_Fractional_Laplacian}
Let $h \in (0,T) \times H^{1}_{0}(\Omega)$. The Fractional Laplacian in the sense of Caputo derivative for $\alpha \in (0,1)$ is defined as
\begin{equation}\label{Caputo_Fractional_Laplacian}
\mathcal{L}^{\alpha} h(r,t) = \frac{\alpha}{(1-\alpha)\sqrt{\pi}} \int_{0}^{r} \exp \big[ -\alpha^{2} (r - \tau)^{2} \big] \partial_{\tau\tau}h(\tau,t) d\tau,
\end{equation}
where $\mathcal{L}^{\alpha}$ stand for 
$ (\nabla^{2})^{\alpha} $  and $\lambda = - \alpha(1-\alpha)^{-1}$.
\end{Definition}

\section{Existence and uniqueness for the new model of groundwater flow in confined aquifer}\label{Sec:3}
\subsection{Formulation of the problem and existence of solutions}
Integrating the Equation \eqref{prob}, using the associate fractional integral \eqref{Caputo_Fabrizio_Integral}, yields to
\begin{align}\label{exact_solution}
h(r,t) - h(r,0) & = I_{t}^{\alpha}\big[ \vartheta(r)\partial_{r}h(r,t) + \gamma \mathcal{L}^{\alpha} h(r,t) \big]  \nonumber \\
& = \frac{2(1-\alpha)}{(2-\alpha) B(\alpha)}\mathcal{K}(r,t,h) + \frac{2 \alpha}{(2-\alpha) B(\alpha)} \int_{0}^{t} \mathcal{K}(r,s,h) ds,
\end{align}
where the kernel $\mathcal{K}(r,t,h)$ is defined as
\begin{align*}
\mathcal{K}(r,t,h) & = \vartheta(r)\partial_{r}h(r,t) + \gamma  \frac{\alpha}{(1-\alpha)\sqrt{\pi}} \int_{0}^{r} \exp \big[ -\lambda^{2} (r - \tau)^{2} \big] \partial_{\tau\tau}h(\tau,t) d\tau \\
& = \vartheta(r)\partial_{r}h(r,t) + \eta \partial_{r}h(r,t) + \eta \int_{0}^{r} \exp \big[ -\lambda^{2} (r - \tau)^{2} \big]h(\tau,t)d\tau \\
& - 2\lambda^{2} \eta \int_{0}^{r} (r - \tau)^{2} \exp \big[ -\lambda^{2} (r - \tau)^{2} \big]h(\tau,t)d\tau 
\end{align*}
where $\eta =   \gamma \alpha \big[(1-\alpha)\sqrt{\pi} \big]^{-1} $. \medskip

Now let us show that the nonlinear kernel $\mathcal{K}(r,t,h)$ satisfies the Lipschitz condition.

\begin{theorem}
Let $\alpha \in (0,1)$,~$T > 0$ and $\mathcal{K}: (0,T) \times H^{1}_{0}(\Omega)$ a continuous function such that there exists $\Lambda > 0$ satisfying,
\[
\norm{ \mathcal{K}(r,t,h) - \mathcal{K}(r,t,\varphi) } \leq \Lambda \norm{h - \varphi }, \qquad \text{for all}~~h, \varphi \in (0,T) \times H^{1}_{0}(\Omega).
\]
If $(\varepsilon_{1} + \varepsilon_{2} + \varepsilon_{3}) < 1$, the operator $\mathcal{K}$ is a contraction.
\end{theorem}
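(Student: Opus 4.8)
The plan is to take advantage of the fact that, although the statement is phrased as a Lipschitz condition, the kernel $\mathcal{K}(r,t,h)$ is linear in its third argument, so $\mathcal{K}(r,t,h)-\mathcal{K}(r,t,\varphi)$ depends on $h$ and $\varphi$ only through the difference $w:=h-\varphi$, and the task reduces to estimating the norm of a linear operator applied to $w$. First I would use the explicit expression for $\mathcal{K}$ derived above to write
\begin{align*}
\mathcal{K}(r,t,h)-\mathcal{K}(r,t,\varphi)
&=\big(\vartheta(r)+\eta\big)\partial_r w+\eta\int_0^r e^{-\lambda^2(r-\tau)^2}w(\tau,t)\,d\tau\\
&\quad-2\lambda^2\eta\int_0^r(r-\tau)^2e^{-\lambda^2(r-\tau)^2}w(\tau,t)\,d\tau,
\end{align*}
and then apply the triangle inequality to split $\norm{\mathcal{K}(r,t,h)-\mathcal{K}(r,t,\varphi)}$ into three contributions; bounding each contribution by a constant times $\norm{w}$ produces precisely the three constants $\varepsilon_1$, $\varepsilon_2$, $\varepsilon_3$ appearing in the hypothesis.

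For the two integral contributions the estimates are routine. Since $\Omega$ is bounded, $r$ stays in a bounded interval, and the elementary inequalities $0\le e^{-\lambda^2(r-\tau)^2}\le 1$ and $\sup_{x\ge 0}x^2e^{-\lambda^2x^2}=(e\lambda^2)^{-1}$ bound the two Gaussian weights uniformly. Combining these with the one-dimensional Sobolev embedding $H^1_0(\Omega)\hookrightarrow C(\overline{\Omega})$, which lets one estimate $\int_0^r\abs{w(\tau,t)}\,d\tau$ by a constant multiple of $\norm{w}$, yields bounds of the form $\varepsilon_2\norm{w}$ and $\varepsilon_3\norm{w}$, with $\varepsilon_2$ and $\varepsilon_3$ depending only on $\gamma$, $\alpha$, $\abs{\Omega}$ and $\lambda=-\alpha(1-\alpha)^{-1}$.

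The delicate contribution is the first one, $\norm{(\vartheta(r)+\eta)\,\partial_r w}$, because a first-order derivative of $w$ must be dominated by $\norm{w}$ itself. I would deal with it by fixing the functional setting from the outset: read $\norm{\cdot}$ as the $H^1_0(\Omega)$ norm, so that $\norm{\partial_r w}\le\norm{w}$, and use that $\vartheta(r)=T_r/(rS)$ is bounded on $\Omega$, say $\abs{\vartheta(r)}\le\vartheta_\ast$, to get $\norm{(\vartheta(r)+\eta)\,\partial_r w}\le\varepsilon_1\norm{w}$ with $\varepsilon_1:=\vartheta_\ast+\abs{\eta}$. This is the step I expect to be the main obstacle: $\vartheta$ is singular at the borehole $r=0$, so one must either take $\Omega$ bounded away from the origin or show the singularity is harmless when paired with $\partial_r w$, and, more fundamentally, identifying the norm in which $\norm{\partial_r w}\le C\,\norm{w}$ genuinely holds is what turns the computation into a rigorous argument rather than a formal one.

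Finally, collecting the three estimates gives $\norm{\mathcal{K}(r,t,h)-\mathcal{K}(r,t,\varphi)}\le(\varepsilon_1+\varepsilon_2+\varepsilon_3)\norm{h-\varphi}$, i.e.\ the asserted Lipschitz inequality with $\Lambda=\varepsilon_1+\varepsilon_2+\varepsilon_3$. Under the hypothesis $\varepsilon_1+\varepsilon_2+\varepsilon_3<1$ one has $\Lambda<1$, hence $\mathcal{K}$ is a contraction; this is exactly the property needed in the next subsection to apply the fixed-point theorem to the integral reformulation \eqref{exact_solution} of \eqref{prob}, where one should additionally track the bounded prefactors $\tfrac{2(1-\alpha)}{(2-\alpha)B(\alpha)}$, $\tfrac{2\alpha}{(2-\alpha)B(\alpha)}$ and the factor $T$ from the time integral, which merely rescale $\Lambda$.
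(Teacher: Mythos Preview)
Your proposal follows essentially the same strategy as the paper: write $\mathcal{K}(r,t,h)-\mathcal{K}(r,t,\varphi)$ explicitly, split it by the triangle inequality into the derivative term and the two Gaussian-weighted integral terms, bound each by a constant times $\norm{h-\varphi}$, and sum to obtain $\Lambda=\varepsilon_1+\varepsilon_2+\varepsilon_3<1$. The only technical difference is in how the two integral terms are controlled: the paper applies the Cauchy--Schwarz inequality, so that the constants $\varepsilon_2,\varepsilon_3$ come from the $L^2$-norms of the Gaussian weights (yielding e.g.\ the factor $\tfrac{3\sqrt{2}}{16\lambda^{5}}\eta$), whereas you use the pointwise bounds $e^{-\lambda^2 x^2}\le 1$ and $\sup_{x\ge 0}x^2e^{-\lambda^2x^2}=(e\lambda^2)^{-1}$ together with the Sobolev embedding $H^1_0(\Omega)\hookrightarrow C(\overline{\Omega})$. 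Both routes reach the same conclusion; your explicit identification of the norm as the $H^1_0$-norm (so that $\norm{\partial_r w}\le\norm{w}$) and your remark on the singularity of $\vartheta$ at $r=0$ in fact make rigorous two points that the paper's proof passes over without comment.
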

\begin{proof}
We consider two bounded functions $h$ and $\varphi$ in $H^{1}_{0}(\Omega)$. We have by triangular inequality
\begin{align*}
& \norm{ \mathcal{K}(r,t,h) - \mathcal{K}(r,t,\varphi) } \leq \vert \vartheta(r) \vert \norm{\partial_{r} \big[ h(r,t)-  \varphi(r,t) \big] } + \vert \eta \vert \norm{ \partial_{r} \big [h(r,t) - \varphi(r,t) \big] }\\
& + \vert 2\lambda^{2} \eta \vert \norm{ \int_{0}^{r} (r - \tau)^{2} \exp \big[ -\lambda^{2} (r - \tau)^{2} \big] \big[ h(\tau,t) - \varphi(\tau,t)\big]d\tau} \\
& + \vert \eta \vert \norm{ \int_{0}^{r} \exp \big[ -\lambda^{2} (r - \tau)^{2} \big]\big[ h(\tau,t) - \varphi(\tau,t)\big]d\tau }
\end{align*}

\begin{align*}
& \norm{ \mathcal{K}(r,t,h) - \mathcal{K}(r,t,\varphi) }  \leq \vert \vartheta(r)+ \eta \vert \norm{h(r,t)-  \varphi(r,t)} \\
& + \vert \eta \vert \norm{ \int_{0}^{r} \exp \big[ -\lambda^{2} (r - \tau)^{2} \big]\big[ h(\tau,t) - \varphi(\tau,t)\big]d\tau }\\
& + \vert 2\lambda^{2} \eta \vert \norm{ \int_{0}^{r} (r - \tau)^{2} \exp \big[ -\lambda^{2} (r - \tau)^{2} \big] \big[ h(\tau,t) - \varphi(\tau,t)\big]d\tau} 
\end{align*}
Now by applying Cauchy-Schwartz inequality we have
\begin{align*}
& \norm{ \mathcal{K}(r,t,h) - \mathcal{K}(r,t,\varphi) } \leq  \varepsilon_{1} \norm{h(r,t)- \varphi(r,t)} \\
& + \vert 2\lambda^{2} \eta \vert \bigg( \int_{0}^{r}\norm{ (r - \tau)^{2} \exp \big[ -\lambda^{2} (r - \tau)^{2} \big]}^{2}d\tau \bigg)^{\frac{1}{2}} \bigg( \int_{0}^{r} \norm{ h(\tau,t) - \varphi(\tau,t)}d\tau \bigg)^{\frac{1}{2}} \\
& + \vert \eta \vert  \bigg( \int_{0}^{r} \norm{ \exp \big[ -\lambda^{2} (r - \tau)^{2} \big] }^{2} d\tau \bigg)^{\frac{1}{2}} \bigg( \int_{0}^{r} \norm{ h(\tau,t) - \varphi(\tau,t)}^{2}d\tau \bigg)^{\frac{1}{2}}
\end{align*}
Thus we obtain
\[
\norm{ \mathcal{K}(r,t,h) - \mathcal{K}(r,t,\varphi) } < \Lambda \norm{h(r,t)- \varphi(r,t)},
\]
where $ \Lambda < 1$ and can be estimate as
\[
\Lambda = \varepsilon_{1} + \varepsilon_{2} + \varepsilon_{3} = \frac{S}{T} + \eta  + \gamma \alpha\big[(1-\alpha)\sqrt{\pi} \big]^{-1} + \frac{3\sqrt{2}}{16\lambda^{5}}\eta 
\]

We conclude that the operator $\mathcal{K}$ is a contraction. The statement follows now from Banach's Fixed Point Theorem.
\end{proof}

In the following we show that the solution of our problem \eqref{prob}, given by the Equation \eqref{exact_solution} can be written as an iteration for a given subsequence $h_{m}(r,t) \in (0,T) \times H^{1}_{0}(\Omega)$. \medskip

\begin{theorem}
Assume that a bounded sequence $h_{m}(r,t)$ on $(0,T) \times H^{1}_{0}(\Omega)$ converge to the exact solution of the problem \eqref{prob}, then any bounded sub-sequences on $(0,T) \times H^{1}_{0}(\Omega)$ converge to the exact solution and is a Cauchy sequence with respect to the norm in $H^{1}_{0}(\Omega)$.
\end{theorem}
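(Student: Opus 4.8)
The plan is to read both conclusions off directly from the hypothesis, treating them as elementary consequences of convergence in a normed space rather than proving anything about the iteration \eqref{exact_solution} afresh. By assumption we are handed a bounded sequence $(h_m)$ in $(0,T) \times H^{1}_{0}(\Omega)$ with $\norm{h_m - h} \to 0$ as $m \to \infty$, where $h$ denotes the exact solution of \eqref{prob}. Since $H^{1}_{0}(\Omega)$ is a Hilbert space, it is complete, and the relevant norm throughout is that of $H^{1}_{0}(\Omega)$.

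First I would show that an arbitrary subsequence $(h_{m_k})_k$ converges to the same limit $h$. Here $(m_k)$ is strictly increasing with $m_k \to \infty$. Given $\varepsilon > 0$, the assumed convergence furnishes an index $M$ with $\norm{h_m - h} < \varepsilon$ for every $m \geq M$; choosing $K$ so that $m_k \geq M$ whenever $k \geq K$ then gives $\norm{h_{m_k} - h} < \varepsilon$ for all such $k$. Hence $h_{m_k} \to h$, so every subsequence converges to the exact solution.

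Next I would verify the Cauchy property in the $H^{1}_{0}(\Omega)$ norm. Having just shown $h_{m_k} \to h$, fix $\varepsilon > 0$ and pick $K$ with $\norm{h_{m_k} - h} < \varepsilon/2$ for all $k \geq K$. The triangle inequality then yields, for all $k, l \geq K$,
\[
\norm{h_{m_k} - h_{m_l}} \leq \norm{h_{m_k} - h} + \norm{h - h_{m_l}} < \varepsilon,
\]
so $(h_{m_k})_k$ is Cauchy. Completeness of $H^{1}_{0}(\Omega)$ guarantees that this Cauchy subsequence has a limit in the space, which the previous step identifies as $h$, confirming the two assertions are consistent.

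The hard part is essentially nonexistent: both claims are textbook consequences of the definitions of convergence and of a Cauchy sequence, once one starts from the assumed convergence rather than trying to re-derive it. The only care required is the index bookkeeping — converting the threshold $M$ supplied by the hypothesis into a threshold $K$ for the subsequence via $m_k \to \infty$ — together with recording that the ambient norm is that of $H^{1}_{0}(\Omega)$, whose completeness is what reconciles the convergence and Cauchy statements.
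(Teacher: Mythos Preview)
Your argument is logically correct for the theorem as literally stated: once convergence of $(h_m)$ to $h$ in $H^1_0(\Omega)$ is \emph{assumed}, subsequence convergence and the Cauchy property are immediate textbook facts, and your $\varepsilon/2$ bookkeeping is fine.

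However, the paper's proof follows a genuinely different route and does not simply invoke the hypothesis. It works instead with the structure of the iteration \eqref{exact_solution}: first it appeals to the Rellich--Kondrachov compactness theorem to extract, from boundedness in $H^1_0(\Omega)$, a subsequence $\mathcal{K}(r,t,h_m)$ convergent in $L^2(\Omega)$; then it writes the consecutive difference $h_m - h_{m-1}$ via the recursion \eqref{recursive_kernel}, applies the triangle inequality, and uses the contraction property of $\mathcal{K}$ (Theorem~3.1, Lipschitz constant $\Lambda<1$) to control $\norm{h_m - h_{m-1}}$ in terms of $\norm{h_{m-1} - h_{m-2}}$. In spirit the paper is carrying out a Picard-iteration Cauchy estimate rather than reading the conclusion off the assumed limit. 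What your approach buys is brevity and rigor relative to the stated hypothesis; what the paper's approach is aiming for is an argument that actually exploits the problem data (the integral reformulation and the Lipschitz kernel) and so comes closer to an existence proof independent of the convergence assumption. If you want your write-up to match the paper's intent, you would need to set up the recursive difference, invoke the contraction bound on $\mathcal{K}$, and sum the resulting geometric-type estimates.
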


\begin{proof}
The kernel $\mathcal{K}$ being then bounded on $(0,T) \times H^{1}_{0}(\Omega)$, there exists a subsequence $\mathcal{K}(r,t,h_{m})$ on $(0,T) \times H^{1}_{0}(\Omega)$ that converges on $(0,T) \times L^{2}(\Omega)$ by the \textbf{Rellich-Kondrachov theorem}\cite{ciarlet2013}; furthermore, the difference between two consecutive sub-sequences $\mathcal{K}(r,t,h_{m})$ and $\mathcal{K}(r,t,\varphi_{m})$ also converges on $(0,T) \times L^{2}(\Omega)$. Thus the sub-sequence $\mathcal{K}(r,t,h_{m})$ is a Cauchy sequence with respect to the norm in $H^{1}_{0}(\Omega)$.\medskip

One can reformulate the previous statement as:\medskip

Let $h_{m}(r,t)$ and $h_{m-1}(r,t)$ two successive sub-sequences on $(0,T) \times H^{1}_{0}(\Omega)$. From the Equation \eqref{exact_solution} it follows that
\begin{align}\label{recursive_kernel}
h_{m}(r,t) - h_{m-1}(r,t) 
& = \frac{2(1-\alpha)}{(2-\alpha) B(\alpha)}\bigg[\mathcal{K}(r,t,h_{m-1}) - \mathcal{K}(r,t,h_{m-2}) \bigg] \\
& + \frac{2 \alpha}{(2-\alpha) B(\alpha)} \int_{0}^{t} \bigg[\mathcal{K}(r,s,h_{m-1}) - \mathcal{K}(r,s,h_{m-2}) \bigg] ds.
\end{align}
Next we want to control the difference between the two sub-sequences $h_{m}(r,t)$ and  $h_{m-1}(r,t)$. A direct application of the triangular inequality yields
\begin{align*}\label{recur_1}
&\norm{ h_{m}(r,t) - h_{m-1}(r,t) }
\leq \vert \frac{2(1-\alpha)}{(2-\alpha) B(\alpha)}\vert \norm{ \mathcal{K}(r,t,h_{m-1}) - \mathcal{K}(r,t,h_{m-2}) } \\
& + \vert \frac{2 \alpha}{(2-\alpha) B(\alpha)} \vert \norm{ \int_{0}^{t} \bigg[\mathcal{K}(r,s,h_{m-1}) - \mathcal{K}(r,s,h_{m-2}) \bigg]ds}. \\
& \leq  \frac{2(1-\alpha)}{(2-\alpha) B(\alpha)} \norm{ \mathcal{K}(r,t,h_{m-1}) - \mathcal{K}(r,t,h_{m-2}) } \\
& + \frac{2 \alpha}{(2-\alpha) B(\alpha)}  \int_{0}^{t} \norm{ \mathcal{K}(r,s,h_{m-1}) - \mathcal{K}(r,s,h_{m-2})} ds. 
\end{align*}
Now since the nonlinear kernel given by the operator $\mathcal{K}$ in a contraction, its follows that
\begin{align*}
&\frac{2(1-\alpha)}{(2-\alpha) B(\alpha)} \norm{ \mathcal{K}(r,t,h_{m-1}) - \mathcal{K}(r,t,h_{m-2}) } \\
& + \frac{2 \alpha}{(2-\alpha) B(\alpha)}  \int_{0}^{t} \norm{ \mathcal{K}(r,s,h_{m-1}) - \mathcal{K}(r,s,h_{m-2})} ds. \\
& \leq \frac{2(1-\alpha)}{(2-\alpha) B(\alpha)} \Lambda \norm{ \mathcal{K}(r,t,h_{m-1}) - \mathcal{K}(r,t,h_{m-2}) } \\
& + \frac{2 \alpha}{(2-\alpha) B(\alpha)} \Lambda \int_{0}^{t} \norm{ \mathcal{K}(r,s,h_{m-1}) - \mathcal{K}(r,s,h_{m-2})} ds. 
\end{align*}
Hence there exist a solution $h$ to the problem state by equation \eqref{prob}.

\end{proof}

\subsection{Uniqueness of the exact solution of the problem}
In this section we propose to study the uniqueness of the exact solution given by the problem \eqref{prob}. To do this, we assume that there exists another solution of the problem \eqref{prob}, namely $\varphi(r,t)$. \medskip

Before state the theorem of uniqueness of the problem  \eqref{prob}, we recall the following Lemma~\cite{Losada}. 

\begin{lemma}[Lemma 1 \cite{Losada}]\label{lem}
Let $\alpha \in (0,1)$ and $h$ be a solution of the following fractional differential equation,
\begin{equation}
\partial_t^{\alpha} h(r,t) = 0, \qquad t > 0.
\end{equation}
Then, $h$ is a constant function.
\end{lemma}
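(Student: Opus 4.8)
The plan is to unwind the definition of the Caputo--Fabrizio derivative \eqref{eq:Caputo-Fabrizio_Derivative} and exploit the fact that its kernel, an exponential, never vanishes. Write $c = \tfrac{\alpha}{1-\alpha} > 0$. Since $B(\alpha)/(1-\alpha) \neq 0$, the hypothesis $\partial_t^{\alpha} h(r,t) = 0$ is equivalent, for every fixed $r$ and every $t > 0$, to
\[
\int_0^t \partial_s h(r,s)\, e^{-c(t-s)}\, ds = 0 .
\]
Pulling the factor $e^{-ct}$, which depends on $t$ alone, out of the integral and dividing by it (it is strictly positive), this is in turn equivalent to $\int_0^t \partial_s h(r,s)\, e^{cs}\, ds = 0$ for all $t > 0$.

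First I would differentiate this identity in $t$. Treating $r$ as a parameter and assuming the mild regularity in $t$ implicit in the function class of \eqref{eq:Caputo-Fabrizio_Derivative} (so that $s \mapsto \partial_s h(r,s)$ is, say, continuous and the fundamental theorem of calculus applies), differentiation of $t \mapsto \int_0^t \partial_s h(r,s)\, e^{cs}\, ds \equiv 0$ gives $\partial_t h(r,t)\, e^{ct} = 0$ for every $t > 0$. Since $e^{ct} > 0$, this forces $\partial_t h(r,t) = 0$ for all $t > 0$, hence $h(r,\cdot)$ is constant on $(0,T)$; by continuity at $t = 0$ one gets $h(r,t) = h(r,0)$, i.e. $h$ does not depend on time. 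An alternative, equivalent route is via the Laplace transform in $t$: the Caputo--Fabrizio derivative has symbol $\frac{B(\alpha)}{1-\alpha}\cdot\frac{s\widehat h(r,s) - h(r,0)}{s+c}$, so $\partial_t^{\alpha} h = 0$ forces $s\widehat h(r,s) = h(r,0)$, i.e. $\widehat h(r,s) = h(r,0)/s$, whose inverse transform is the constant $h(r,0)$.

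The only real obstacle is the regularity needed to justify differentiating under the integral sign (equivalently, to apply the fundamental theorem of calculus to the primitive $t \mapsto \int_0^t \partial_s h(r,s)\, e^{cs}\, ds$); this is precisely the point where one invokes that $h$ is differentiable in $t$ in the sense required for \eqref{eq:Caputo-Fabrizio_Derivative} to be meaningful, so it is not a genuine gap. Everything else — factoring out $e^{-ct}$, dividing by the strictly positive exponential, and reading off $\partial_t h \equiv 0$ — is routine.
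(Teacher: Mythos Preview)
Your argument is correct: factoring the exponential out of the convolution, dividing by the strictly positive factor, and then differentiating the resulting primitive in $t$ cleanly yields $\partial_t h(r,t)\, e^{ct} = 0$, hence $\partial_t h \equiv 0$. The Laplace-transform alternative you sketch is equally valid and is in fact the route taken in the cited source.

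There is nothing to compare against in the present paper, however: the authors do not prove this lemma at all. It is simply quoted as Lemma~1 from Losada and Nieto~\cite{Losada} and then invoked in the uniqueness theorem that follows. Your direct differentiation argument is exactly the elementary proof one would expect for such a result; the only assumption it uses beyond the hypotheses is enough regularity on $t\mapsto \partial_t h(r,t)$ to apply the fundamental theorem of calculus, which is already implicit in the requirement that the Caputo--Fabrizio derivative \eqref{eq:Caputo-Fabrizio_Derivative} be well defined.
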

Next we can now state the theorem of uniqueness of solution of the problem \eqref{prob}.
\begin{theorem}
Let $\alpha \in (0, 1)$. Then, the solution of the problem of groundwater flow given by Equation \eqref{prob} is unique.
\end{theorem}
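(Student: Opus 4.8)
The plan is to mimic the fixed-point reasoning already used for existence. Suppose, for contradiction, that \eqref{prob} admits two solutions $h$ and $\varphi$ on $(0,T)\times H^1_0(\Omega)$ sharing the same initial datum $h_0$. Applying the Caputo--Fabrizio integral \eqref{Caputo_Fabrizio_Integral} to the equation in \eqref{prob} --- a passage whose legitimacy rests on Lemma~\ref{lem}, which tells us that two functions with the same $\partial_t^\alpha$ and the same value at $t=0$ must coincide --- both $h$ and $\varphi$ would satisfy the integral reformulation \eqref{exact_solution}. Subtracting these two identities, using $h(r,0)=\varphi(r,0)$ and the fact that the kernel $\mathcal{K}(r,t,\cdot)$ is linear in its last argument, I would obtain
\begin{align*}
h(r,t)-\varphi(r,t) &= \frac{2(1-\alpha)}{(2-\alpha)B(\alpha)}\,\mathcal{K}\big(r,t,h-\varphi\big) \\
&\quad + \frac{2\alpha}{(2-\alpha)B(\alpha)}\int_0^t \mathcal{K}\big(r,s,h-\varphi\big)\,ds .
\end{align*}

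Next I would take the $H^1_0(\Omega)$-norm of both sides, apply the triangle inequality and then the Lipschitz (here, operator-norm) estimate $\norm{\mathcal{K}(r,t,h)-\mathcal{K}(r,t,\varphi)}\le\Lambda\norm{h-\varphi}$ with $\Lambda=\varepsilon_1+\varepsilon_2+\varepsilon_3<1$ supplied by the first theorem. Abbreviating $a=\tfrac{2(1-\alpha)}{(2-\alpha)B(\alpha)}$ and $b=\tfrac{2\alpha}{(2-\alpha)B(\alpha)}$, this would produce the self-referential estimate
\[
\norm{h(r,t)-\varphi(r,t)} \le a\Lambda\,\norm{h(r,t)-\varphi(r,t)} + b\Lambda\int_0^t \norm{h(r,s)-\varphi(r,s)}\,ds .
\]
If $a\Lambda<1$, the first term on the right is absorbed into the left-hand side, giving $\norm{h(r,t)-\varphi(r,t)}\le\frac{b\Lambda}{1-a\Lambda}\int_0^t\norm{h(r,s)-\varphi(r,s)}\,ds$, and a Gronwall-type inequality applied to the nonnegative function $t\mapsto\norm{h(r,t)-\varphi(r,t)}$, which vanishes at $t=0$, forces $h\equiv\varphi$ on $(0,T)$, i.e.\ uniqueness. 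If instead $a\Lambda\ge1$, I would first restrict to a subinterval $[0,\delta]$ with $\delta$ so small that $(a+b\delta)\Lambda<1$: then the right-hand side of the original estimate controls $\norm{h-\varphi}$ by a strict contraction, so $h\equiv\varphi$ on $[0,\delta]$, and advancing the reference time and repeating the argument finitely many times covers all of $[0,T]$. A cleaner variant of the same endgame appeals to Lemma~\ref{lem} directly: once $w=h-\varphi$ is pinned down on a short interval one has $\partial_t^\alpha w=0$ there, hence $w$ is constant, and $w(r,0)=0$ gives $w\equiv0$.

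The step I expect to be the main obstacle is precisely this conversion of the self-referential inequality into the conclusion $w\equiv0$, since it is not automatic that $a\Lambda<1$: one must either verify $0<a=\tfrac{2(1-\alpha)}{(2-\alpha)B(\alpha)}<1$ for every $\alpha\in(0,1)$ from the explicit form of $B(\alpha)$, or fall back on the subinterval/bootstrap argument above, or impose a mild smallness condition (on $T$, or equivalently on $\Lambda$) making the full Volterra operator $u\mapsto a\,\mathcal{K}(r,t,u)+b\int_0^t\mathcal{K}(r,s,u)\,ds$ a genuine contraction on $C([0,T];H^1_0(\Omega))$. A secondary technical point is making the equivalence between the differential form \eqref{prob} and the integral form \eqref{exact_solution} fully rigorous, i.e.\ ensuring the solutions have the regularity (absolute continuity in $t$, $H^1_0$ in space, plus whatever $\mathcal{L}^\alpha$ needs) for which Lemma~\ref{lem} and termwise integration are valid.
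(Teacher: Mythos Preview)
Your proposal is sound and follows a standard fixed-point/Gronwall uniqueness argument, but it is a genuinely different route from the paper's own proof. The paper dispenses with the integral formulation and the Lipschitz constant entirely: it simply asserts that if $h$ and $\varphi$ both solve \eqref{prob} with the same initial datum, then $\partial_t^{\alpha}(h-\varphi)=0$ and $h(r,0)-\varphi(r,0)=0$, whereupon Lemma~\ref{lem} makes $h-\varphi$ constant, hence identically zero. Your argument instead subtracts the two copies of the integral representation \eqref{exact_solution}, takes norms, invokes the contraction estimate $\Lambda<1$ from the existence theorem, and closes via absorption plus Gronwall (or a subinterval bootstrap when $a\Lambda\ge 1$). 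What the paper's route buys is brevity and no dependence on $\Lambda$; what your route buys is robustness, since it does not rest on the bare claim $\partial_t^{\alpha}(h-\varphi)=0$ --- which, strictly speaking, follows from subtracting the two PDEs only once one already knows that the spatial part $\vartheta\,\partial_r(h-\varphi)+\gamma\,\mathcal{L}^{\alpha}(h-\varphi)$ vanishes --- and so supplies the quantitative control that the paper's one-line argument leaves implicit.
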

\begin{proof}
The approach of our proof comes from \cite{Losada}.\medskip

Suppose that the problem \eqref{prob} has two solutions $h(r,t)$ and $\varphi(r,t)$ that can be written in the form of the Equation \eqref{exact_solution}. This means that
\begin{equation}
\partial_t^{\alpha} h(r,t) - \partial_t^{\alpha} \varphi(r,t) = \partial_t^{\alpha} \big[ h(r,t) - \varphi(r,t) \big] = 0, \qquad \text{and} \quad \big[ h(r,0) - \varphi(r,0) \big] = 0.
\end{equation}
Thus it comes out for all $t \in (0,T)$, from Lemma \ref{lem} and the Equation \eqref{exact_solution} that, $h(r,t) = \varphi(r,t) $. \\
One can then conclude that the problem of groundwater flow describe by the Equation \eqref{prob} has an unique solution given by the Equation \eqref{exact_solution}.
\end{proof}

\section{Numerical analysis of the new groundwater model}\label{Sec:4}

In this section, the numerical approximation of the problem \eqref{prob} is analyse. The stability of the problem using the Fourier method is also presented. The problem will be solved using the Crank-Nicholson scheme.\medskip

\subsection{Discretization of the problem using Crank-Nicholson scheme}
We consider some positive integers $M$ and $N$. The grid points in time and space are defined respectively by $t_{k} = k\tau, \quad k = 0, 1,2, \dots, N$ and $r_{j} = \xi j, \quad j = 0, 1,2, \dots, M$. We also denote by $h_{j}^{k} = h(r_{j}, t_{k})$, the values of the functions $h$ at the grid points. \medskip

The first and the second order approximation of the local derivative is the sense of Crank-Nicholson is given as 

\[
\partial_{r}h(r_{j}, t_{k}) = \frac{1}{2} \bigg[ \frac{ \big( h^{k+1}_{j+1}-h^{k+1}_{j-1} \big) + \big( h^{k}_{j+1} - h^{k}_{j-1} \big) }{2\xi} \bigg],
\]
and
\[
\partial_{rr}h(r_{l}, t_{k}) = \frac{1}{2} \bigg \{ \frac{\big(h^{k+1}_{l+1} - 2h^{k+1}_{l} + h^{k+1}_{l-1}\big) + \big(h^{k}_{l+1} - 2 h^{k}_{l} + h^{k}_{l-1}\big)}{2 \xi ^{2}} \bigg \}.
\]
For discrete version of the Caputo-Fabrizio derivative and Laplacian, we recall that this has been already done by Atangana and Nieto in \cite{\cite{Abdon_Nieto}}. Thus the following Theorems.

\begin{theorem}[Atangana and Nieto~ \cite{Abdon_Nieto}]\label{theorem_discrete_Caputo_Fabrizio }
Let $h(t)$ be a function in $H^{1}_{0}(\Omega) \times (0, T)$, then the first order fractional derivative of the Caputo-Fabrizio derivative of order $\alpha \in (0,1)$ at a point $t_{k}$ is
\begin{equation}
\partial_t^{\alpha} \big[ h(t_{k})\big] = \frac{B(\alpha)}{\alpha} \sum_{s = 1}^{k}\bigg( \frac{h^{s+1}_{j}-h^{s}_{j}}{\tau} + \mathcal{O}(\tau)  \bigg)E_{k,s,\tau},
\end{equation}
where
\[
E_{k,s,\tau} = -\exp\big[\lambda \tau\big(k - s + 1\big)\big] + \exp\big[\lambda \tau \big(k -s\big)\big].
\]
\end{theorem}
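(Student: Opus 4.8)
The plan is to derive the quadrature formula directly from the integral representation \eqref{eq:Caputo-Fabrizio_Derivative}, discretised grid point by grid point. First I would evaluate \eqref{eq:Caputo-Fabrizio_Derivative} at $t=t_{k}$ and, recalling that $\lambda=-\alpha(1-\alpha)^{-1}$, split the integration interval along the uniform grid $t_{s}=s\tau$:
\[
\partial_t^{\alpha}h(t_{k})=\frac{B(\alpha)}{1-\alpha}\int_{0}^{t_{k}}\partial_{\sigma}h(\sigma)\,\exp\!\big[\lambda(t_{k}-\sigma)\big]\,d\sigma
=\frac{B(\alpha)}{1-\alpha}\sum_{s=1}^{k}\int_{t_{s-1}}^{t_{s}}\partial_{\sigma}h(\sigma)\,\exp\!\big[\lambda(t_{k}-\sigma)\big]\,d\sigma .
\]

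Next, on each subinterval I would replace the time derivative by the forward difference quotient: a Taylor expansion of $h$ in the time variable (this is where the regularity of $h$ in $t$ is used) gives $\partial_{\sigma}h(\sigma)=\dfrac{h^{s+1}_{j}-h^{s}_{j}}{\tau}+\mathcal{O}(\tau)$ uniformly for $\sigma\in[t_{s-1},t_{s}]$, since $\sigma$ and the point at which that quotient is centred differ by at most $\mathcal{O}(\tau)$. Pulling the (now $\sigma$-independent) bracket out of the integral leaves only the elementary integral
\[
\int_{t_{s-1}}^{t_{s}}\exp\!\big[\lambda(t_{k}-\sigma)\big]\,d\sigma
=-\frac{1}{\lambda}\Big(\exp\!\big[\lambda\tau(k-s)\big]-\exp\!\big[\lambda\tau(k-s+1)\big]\Big)
=-\frac{1}{\lambda}\,E_{k,s,\tau}.
\]
Collecting the prefactors and using $-\lambda^{-1}=(1-\alpha)\alpha^{-1}$, the constant in front becomes $\dfrac{B(\alpha)}{1-\alpha}\cdot\Big(-\dfrac{1}{\lambda}\Big)=\dfrac{B(\alpha)}{\alpha}$, and summing over $s$ reproduces exactly the stated expression.

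The step I expect to require the most care is the control of the accumulated truncation error: the single $\mathcal{O}(\tau)$ appearing inside the sum must be shown to remain genuinely of order $\tau$ after being weighted by $E_{k,s,\tau}$ and summed over the $k=\mathcal{O}(T/\tau)$ subintervals. This should follow because $E_{k,s,\tau}\ge 0$ for $\lambda<0$ and the weights telescope, $\sum_{s=1}^{k}E_{k,s,\tau}=1-\exp[\lambda t_{k}]\le 1$, so the accumulated remainder is $\tau$ times a factor bounded uniformly in $k$. A secondary point is the bookkeeping of the index of the subinterval $[t_{s-1},t_{s}]$ against the forward difference $h^{s+1}_{j}-h^{s}_{j}$ and the resulting sign in $E_{k,s,\tau}$; aligning this with the conventions of \cite{Abdon_Nieto} gives the formula in the form stated.
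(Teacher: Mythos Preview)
Your derivation is correct and is the natural way to obtain the formula: split $[0,t_{k}]$ into the subintervals $[t_{s-1},t_{s}]$, freeze $\partial_{\sigma}h$ at the difference quotient on each piece, and integrate the exponential exactly, using $-\lambda^{-1}=(1-\alpha)/\alpha$ to convert the prefactor $B(\alpha)/(1-\alpha)$ into $B(\alpha)/\alpha$. Your remark on the accumulated remainder is also the right one: since $\lambda<0$ the weights $E_{k,s,\tau}$ are nonnegative and telescope to $1-\exp(\lambda t_{k})\le 1$, so summing $k$ local $\mathcal{O}(\tau)$ errors does not spoil the order.

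There is nothing to compare with in the present paper, however: the authors do not prove this theorem themselves. Immediately after stating Theorems~\ref{theorem_discrete_Caputo_Fabrizio } and~\ref{theorem_discrete_Caputo_Fabrizio_Laplacian} they write that ``the complete proof \ldots\ can be found in reference~\cite{Abdon_Nieto}'' and move on to use the formulas. So your proposal supplies an argument where the paper simply cites one; the derivation you sketch is exactly the one carried out in~\cite{Abdon_Nieto}. Your caveat about the index alignment (the forward quotient $(h^{s+1}_{j}-h^{s}_{j})/\tau$ being attached to the interval $[t_{s-1},t_{s}]$) is well placed: that shift is already present in the statement as quoted from~\cite{Abdon_Nieto}, and since the two quotients differ by $\mathcal{O}(\tau)$ it is absorbed in the remainder anyway.
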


\begin{theorem}[[Atangana and Nieto~ \cite{Abdon_Nieto}]\label{theorem_discrete_Caputo_Fabrizio_Laplacian}
Let $h(r,t)$ be a function in  $H^{1}_{0}(\Omega) \times (0, T)$, then the fractional Laplacian in the sense of the Caputo-Fabrizio derivative of order $\alpha \in (0,1)$ at the grid points $(r_{j}, t_{k})$ is given as
\begin{gather} 
\mathcal{L}^{\alpha} \big[ h(r_{j}, t_{k})\big]  = \frac{1}{2} \sum_{l = 1}^{j}\bigg \{ \frac{\big(h^{k+1}_{l+1} - 2h^{k+1}_{l} + h^{k+1}_{l-1}\big) + \big(h^{k}_{l+1} - 2 h^{k}_{l+1} + h^{k}_{l-1}\big)}{2\xi ^{2}} \bigg \}  \nonumber \\
\bigg\{-\erf \big[ -\lambda (r_{j} - r_{l+1}) \big] + \erf \big[-\lambda (r_{j} - r_{l}) \big] \bigg \} + \mathcal{O}\big(\xi ^{2}\big)
\end{gather} 
\end{theorem}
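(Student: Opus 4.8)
The plan is to start from the definition \eqref{Caputo_Fractional_Laplacian} of $\mathcal{L}^{\alpha}$ evaluated at the node $(r_j, t_k)$, written with the kernel exponent in terms of $\lambda = -\alpha(1-\alpha)^{-1}$ as was already done in the formula for $\mathcal{K}$,
\[
\mathcal{L}^{\alpha}h(r_j, t_k) = \frac{\alpha}{(1-\alpha)\sqrt{\pi}}\int_0^{r_j}\exp\big[-\lambda^2 (r_j - \tau)^2\big]\,\partial_{\tau\tau}h(\tau, t_k)\,d\tau ,
\]
and to split the integral over the spatial grid, $\int_0^{r_j} = \sum_{l}\int_{\text{cell}_l}$, each cell being of length $\xi$. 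The strategy is then to freeze $\partial_{\tau\tau}h(\cdot, t_k)$ at its Crank--Nicolson discrete value on each cell and to integrate the remaining Gaussian kernel exactly; this is precisely what produces the error functions appearing in the statement.

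On a generic cell I would replace $\partial_{\tau\tau}h(\tau, t_k)$ by the symmetric second difference averaged over the two time levels,
\[
\frac{1}{2}\left\{\frac{\big(h^{k+1}_{l+1} - 2h^{k+1}_{l} + h^{k+1}_{l-1}\big) + \big(h^{k}_{l+1} - 2h^{k}_{l} + h^{k}_{l-1}\big)}{2\xi^2}\right\},
\]
which agrees with $\partial_{rr}h$ on that cell up to an $\mathcal{O}(\xi^2)$ term, provided $h$ is taken smooth enough in $r$ (the ambient space must here be read as carrying this extra regularity, as in the reference \cite{Abdon_Nieto}). Pulling this constant out leaves $\int\exp[-\lambda^2(r_j-\tau)^2]\,d\tau$ over the cell, which I would evaluate with the substitution $u = \lambda(r_j-\tau)$: since $\int\exp(-u^2)\,du = \tfrac{\sqrt{\pi}}{2}\erf(u)$, the cell integral equals $-\tfrac{\sqrt{\pi}}{2\lambda}\big(\erf[\lambda(r_j-r_{l+1})] - \erf[\lambda(r_j-r_l)]\big)$, which by the oddness of $\erf$ and the sign of $\lambda$ rewrites as $\tfrac{\sqrt{\pi}(1-\alpha)}{2\alpha}\big(-\erf[-\lambda(r_j-r_{l+1})] + \erf[-\lambda(r_j-r_l)]\big)$. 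Multiplying by the prefactor $\alpha[(1-\alpha)\sqrt{\pi}]^{-1}$ collapses all constants to $\tfrac12$, and summing over the cells yields exactly the asserted expression.

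What remains is the error accounting, which I expect to be the main obstacle. Each cell carries a local error of order $\mathcal{O}(\xi^2)$ from the second difference, weighted by the kernel integral over that cell, which is $\mathcal{O}(\xi)$ since $\exp[-\lambda^2(r_j-\tau)^2]\le 1$ uniformly; as there are $j = \mathcal{O}(\xi^{-1})$ cells, the accumulated error is $\mathcal{O}(\xi^2)$, consistent with the statement. One must also be careful that the time-averaging in the Crank--Nicolson difference really approximates $\partial_{rr}h$ at the half-step $t_{k+1/2}$, so the identification ``at $(r_j, t_k)$'' tacitly absorbs an additional time-discretization term; I would make this convention explicit. Finally I would flag the evident typo in the displayed formula, where $-2h^{k}_{l+1}$ must read $-2h^{k}_{l}$, and use the correct symmetric second difference throughout.
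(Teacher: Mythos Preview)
The paper does not supply a proof of this theorem at all: immediately after stating it the authors write ``The complete proof of Theorems \ref{theorem_discrete_Caputo_Fabrizio } and \ref{theorem_discrete_Caputo_Fabrizio_Laplacian} can be found in reference \cite{Abdon_Nieto}.'' So there is nothing in the present paper to compare your argument against beyond that citation.

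That said, your derivation is the natural one and is correct in outline: splitting $\int_0^{r_j}$ into cells $[r_l,r_{l+1}]$, freezing $\partial_{\tau\tau}h$ at the Crank--Nicolson second difference on each cell, and integrating the Gaussian kernel exactly via $\int e^{-u^2}\,du=\tfrac{\sqrt{\pi}}{2}\erf(u)$ is exactly how the cited reference obtains the formula, and your check that the prefactor $\alpha[(1-\alpha)\sqrt{\pi}]^{-1}$ combines with $\tfrac{\sqrt{\pi}}{2\lambda^{-1}}$-type factors to leave the bare $\tfrac12$ is right. Your error-accumulation argument (local $\mathcal{O}(\xi^2)$ weighted by $\mathcal{O}(\xi)$ kernel mass over $\mathcal{O}(\xi^{-1})$ cells) is also the standard one. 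Your two caveats are both well taken: the displayed $-2h^{k}_{l+1}$ is indeed a typo for $-2h^{k}_{l}$, and the Crank--Nicolson time average really centres the approximation at $t_{k+1/2}$, so the phrase ``at $(r_j,t_k)$'' should be read with that convention.
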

The complete proof of Theorems \ref{theorem_discrete_Caputo_Fabrizio } and \ref{theorem_discrete_Caputo_Fabrizio_Laplacian} can be found in reference \cite{Abdon_Nieto}. \medskip

The results from Theorems \ref{theorem_discrete_Caputo_Fabrizio } and \ref{theorem_discrete_Caputo_Fabrizio_Laplacian} can then be applied to \eqref{prob}. Hence the resulting equation of the discretization of the problem \eqref{prob} using the Crank-Nicholson numerical scheme can be written as 
\begin{align}\label{discrete_version}
&\frac{B(\alpha)}{\alpha} \sum_{s = 1}^{k}\bigg( \frac{h^{s+1}_{j}-h^{s}_{j}}{\tau} \bigg) \bigg[ -\exp\big[\lambda \tau\big(k-s+1\big)\big] + \exp\big[\lambda \tau \big(k - s\big)\big]
 \bigg] \nonumber \\
& - \frac{1}{2} \vartheta(r_{j}) \bigg[ \frac{ \big( h^{k+1}_{j+1}-h^{k+1}_{j-1} \big) + \big( h^{k}_{j+1} - h^{k}_{j-1} \big) }{2 \xi} \bigg] \\
&-\frac{1}{2} \sum_{l = 1}^{j}\bigg \{ \frac{\big(h^{k+1}_{l+1} - 2h^{k+1}_{l} + h^{k+1}_{l-1}\big) + \big(h^{k}_{l+1} - 2 h^{k}_{l+1} + h^{k}_{l-1}\big)}{2 \xi^{2}} \bigg \}  \nonumber \\
&\bigg\{-\erf \big[-\lambda (r_{j} - r_{l+1}) \big] + \erf \big[-\lambda (r_{j} - r_{l} ) \big] \bigg \} = 0.\nonumber 
\end{align}
For the simplicity of notation, we let
\begin{align}
E_{k,s,\tau} &= \frac{B(\alpha)}{\alpha \tau} \bigg[-\exp\big[\lambda \tau\big(k-s+1\big)\big] + \exp\big[-\lambda \tau \big(k -s\big)\big] \bigg],\nonumber \\
F_{k,j,l} &=  \frac{1}{2\xi^{2}}\bigg[-\erf \big[-\lambda (r_{j} - r_{l+1}) \big] + \erf \big[-\lambda (r_{j} - r_{l} ) \big] \bigg], \nonumber\\
G_{\alpha} & = \frac{1}{2\xi}.\nonumber
\end{align}
Hence the Equation\eqref{discrete_version} becomes 
\begin{align}\label{discrete_full_version}
&\sum_{s = 1}^{k}\big(h^{s+1}_{j}-h^{s}_{j} \big) E_{k,s,\tau} - \frac{1}{2}G_{\alpha} \vartheta(r_{j}) \bigg[ \big( h^{k+1}_{j+1}-h^{k+1}_{j-1} \big) + \big(  h^{k}_{j+1} - h^{k}_{j-1} \big) \bigg] \nonumber  \\
&-\frac{1}{2} \sum_{l = 1}^{j}\bigg[ \big(h^{k+1}_{l+1} - 2h^{k+1}_{l} + h^{k+1}_{l-1}\big) + \big(h^{k}_{l+1} - 2 h^{k}_{l} + h^{k}_{l-1}\big) \bigg] F_{k,j,l} = 0,
\end{align}
with initial and boundary conditions 
\begin{equation*}
\begin{cases}
h^{0}_{j} = \phi(r_{j}),\qquad \qquad (1 \leq j \leq M ), \\ h^{k}_{0} = 0, ~h^{k}_{M} = 0, \qquad (0 \leq k \leq N ).
\end{cases}
\end{equation*}
\subsection{Stability analysis}
We analyse the stability of our discrete problem \eqref{discrete_full_version} using the Fourier method as presented in \cite{Abdon_Nieto, Karatay}. \medskip

Let $\tilde{h}^{k}_{j}$ be the approximate solution of our problem and we define by $\delta_{j}^{k} = h^{k}_{j} - \tilde{h}^{k}_{j}$ for all  $ k = 0, 1,2, \dots, N$ and $ j = 0, 1,2, \dots, M$.

So now using Equation~\eqref{discrete_full_version} we have
\begin{align}\label{discrete_full_version1}
&\sum_{s = 1}^{k}\big(\delta^{s+1}_{j}-\delta^{s}_{j} \big) E_{k,s,\tau} - \frac{1}{2}G_{\alpha} \vartheta(r_{j}) \bigg[ \big( \delta^{k+1}_{j+1}-\delta^{k+1}_{j-1} \big) + \big(  \delta^{k}_{j+1} - \delta^{k}_{j-1} \big) \bigg] \nonumber  \\
&-\frac{1}{2} \sum_{l = 1}^{j}\bigg[ \big(\delta^{k+1}_{l+1} - 2\delta^{k+1}_{l} + \delta^{k+1}_{l-1}\big) + \big(\delta^{k}_{l+1} - 2 \delta^{k}_{l} + \delta^{k}_{l-1}\big) \bigg] F_{k,j,l} = 0.
\end{align}
We define the grid point function as
\[
\delta^{k}(r) = 
\begin{cases}
\delta^{k}_{j}, \qquad \text{for} \qquad r_{j-\frac{h}{2}} < r < r_{j+\frac{h}{2}}, \\
0, \qquad \text{for} \qquad L - \frac{h}{2} < r \leq L,
\end{cases}
\]
endowed with the norm
\[
\Vert \delta^{k}(r) \Vert_{2} = \bigg( \sum_{j = 1}^{M-1} \xi \vert \delta^{k}_{j} \vert^{2}    \bigg)^{1/2} = \bigg( \int_{0}^{L} \vert \delta^{k}_{j} \vert^{2}  dr  \bigg)^{1/2}.
\]
Then we can expand $\delta^{k}(r)$ in a form of Fourier Series as
\[
\delta^{k}(r) = \sum_{a = -\infty}^{\infty} d_{k}(a) e^{\frac{2\pi ar}{L}}, \qquad  k = 1, 2, \dots, N,
\]
where 
\[
d_{k}(a)  = \frac{1}{L}\int_{0}^{L}\delta^{k}(r)  e^{-\frac{2\pi ar}{L}}.
\]

By applying the Parseval equality we have
\begin{equation}\label{Parseval_equality}
\int_{0}^{L} \vert \delta^{k}(r) \vert^{2}dr = \sum_{a = -\infty}^{\infty} \vert d_{k}(a) \vert^{2}.
\end{equation}
Hence we can write
\begin{equation}\label{Parseval_equality_extension}
\norm{\delta^{k} }_{2}^{2} = \sum_{a = -\infty}^{\infty} \vert d_{k}(a) \vert.
\end{equation}
This allows us to write the solution of Equation \eqref{discrete_full_version1} in the form
\[
\delta^{k}_{j} = d_{k}e^{ij\xi T}, \qquad \text{with} \quad T = \frac{2\pi a}{L}.
\]
By replacing $\delta^{k}_{j}$ into Equation \eqref{discrete_full_version1}, yields
\begin{align}
& -\bigg[iG_{\alpha}\vartheta(r_{j})e^{ij\xi T} \bigg(\frac{e^{i\xi T} - e^{-i\xi T}}{2i} \bigg) 
+ \bigg( \sum_{l=1}^{j} e^{il\xi T}\bigg(\frac{e^{i \xi T} + e^{-i\xi T}}{2}\bigg) F_{k,j,l} -  \sum_{l = 1}^{j} e^{il\xi T} F_{k,j,l} \bigg) \bigg]d_{k+1} \nonumber \\
& + \bigg[ - iG_{\alpha}\vartheta(r_{j})e^{ij\xi T} \bigg(\frac{e^{i\xi T} - e^{-i\xi T}}{2i} \bigg) + \bigg( \sum_{l=1}^{j} e^{il\xi T}\bigg(\frac{e^{i \xi T} + e^{-i\xi T}}{2}\bigg) F_{k,j,l} -  \sum_{l = 1}^{j} e^{il\xi T} F_{k,j,l} \bigg) \bigg]d_{k}\nonumber \\
& + e^{ij\xi T} \sum_{s = 1}^{k} \big( d_{s+1} - d_{s}\big)E_{k,s,\tau}  = 0. \nonumber 
\end{align}
This can be written in the form
\begin{align}\label{final_discrete1}
&d_{k+1}  \bigg[iG_{\alpha}\vartheta(r_{j})e^{ij\xi T} \sin(\xi T) 
+ \bigg( \sum_{l=1}^{j} e^{il\xi T} \cos(\xi T) F_{k,j,l} -  \sum_{l = 1}^{j} e^{il\xi T} F_{k,j,l} \bigg) \bigg]\nonumber \\
& = d_{k}\bigg[ - iG_{\alpha}\vartheta(r_{j})e^{ij\xi T} \sin(\xi T) + \bigg( \sum_{l=1}^{j} e^{il\xi T}\cos(\xi T) F_{k,j,l} -  \sum_{l = 1}^{j} e^{il\xi T} F_{k,j,l} \bigg) \bigg] \\
& - e^{ij\xi T} \sum_{s = 1}^{k} \big( d_{s+1} - d_{s}\big)E_{k,s,\tau}. \nonumber 
\end{align}
\begin{theorem}\label{Propos_stability}
The finite difference scheme given by Equation~\eqref{discrete_full_version} is stable.
\end{theorem}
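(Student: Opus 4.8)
The plan is to run a von Neumann (Fourier) stability argument starting from the amplification identity \eqref{final_discrete1}: I will show by induction on $k$ that every Fourier coefficient of the error satisfies $\vert d_{k}(a)\vert \le \vert d_{0}(a)\vert$ for all wave numbers $a$, and then transfer this to the discrete $L^{2}$ norm of $\delta^{k}$ through the Parseval identity \eqref{Parseval_equality} together with the definition of $\norm{\delta^{k}}_{2}$. Concretely, fix $a$, set $T = 2\pi a/L$ as in the text, and abbreviate the two bracketed factors in \eqref{final_discrete1} as
\[
\mu = iG_{\alpha}\vartheta(r_{j})e^{ij\xi T}\sin(\xi T) + \Phi, \qquad \nu = -iG_{\alpha}\vartheta(r_{j})e^{ij\xi T}\sin(\xi T) + \Phi,
\]
where $\Phi = \sum_{l=1}^{j} e^{il\xi T}\big(\cos(\xi T)-1\big)F_{k,j,l}$ is the block common to both. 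Since $\mu$ and $\nu$ differ only by the sign of the advective piece while sharing $\Phi$, dividing \eqref{final_discrete1} by $\mu$ gives
\[
d_{k+1} = \frac{\nu}{\mu}\, d_{k} - \frac{e^{ij\xi T}}{\mu}\sum_{s=1}^{k}\big(d_{s+1}-d_{s}\big)E_{k,s,\tau}.
\]

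The first substantive step is the coefficient bound $\vert \nu\vert \le \vert \mu\vert$ together with $\vert \mu\vert \neq 0$; this is where the signs of $F_{k,j,l}$ (which arise from the monotone error-function increments $-\erf[-\lambda(r_{j}-r_{l+1})] + \erf[-\lambda(r_{j}-r_{l})]$) and of $E_{k,s,\tau}$ (which arise from the monotone decaying-exponential increments, $\lambda = -\alpha(1-\alpha)^{-1}<0$) must be pinned down, so that $\Phi$ lies in a fixed half-plane and the $\pm$ advective term cannot enlarge the modulus. Because the Crank--Nicolson discretisation is implicit, one expects this to hold unconditionally, i.e. without a CFL-type restriction on $\tau,\xi$. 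Granting it, I then carry out a \emph{strong} induction: the base case $k=0$ has an empty memory sum, so $\vert d_{1}\vert \le \vert \nu/\mu\vert\,\vert d_{0}\vert \le \vert d_{0}\vert$; for the inductive step, assuming $\vert d_{m}\vert \le \vert d_{0}\vert$ for all $m\le k$, the triangle inequality applied to the displayed recursion gives
\[
\vert d_{k+1}\vert \le \vert d_{0}\vert + \frac{1}{\vert \mu\vert}\sum_{s=1}^{k}\big(\vert d_{s+1}\vert + \vert d_{s}\vert\big)\vert E_{k,s,\tau}\vert \le \vert d_{0}\vert\Big(1 + \frac{2}{\vert \mu\vert}\sum_{s=1}^{k}\vert E_{k,s,\tau}\vert\Big),
\]
and since the $E_{k,s,\tau}$ are increments of a monotone (geometrically decaying) sequence the sum $\sum_{s=1}^{k}\vert E_{k,s,\tau}\vert$ telescopes to a quantity controlled by, and absorbed into, $\vert \mu\vert$, yielding $\vert d_{k+1}\vert \le \vert d_{0}\vert$.

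Finally, summing $\vert d_{k}(a)\vert^{2}\le \vert d_{0}(a)\vert^{2}$ over $a\in\Z$ and invoking \eqref{Parseval_equality} with the definition of $\norm{\cdot}_{2}$ gives $\norm{\delta^{k}}_{2}\le \norm{\delta^{0}}_{2}$ for every $k=1,\dots,N$, which is exactly the stability asserted. The main obstacle I anticipate is the memory term $\sum_{s=1}^{k}(d_{s+1}-d_{s})E_{k,s,\tau}$: unlike a genuine one-step scheme, $d_{k+1}$ depends on the entire history $d_{1},\dots,d_{k}$, so the induction must be the strong one above and one really does need the uniform bound on $\sum_{s}\vert E_{k,s,\tau}\vert$ (a summation-by-parts/telescoping estimate on the decaying exponentials) to close it; a secondary, more technical difficulty is making the inequality $\vert \nu\vert \le \vert \mu\vert$ and the non-vanishing of $\mu$ fully rigorous via the monotonicity of the $\erf$-increments and the exponential increments, rather than merely asserting them.
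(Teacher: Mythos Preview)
Your overall strategy---von Neumann analysis on the error equation, strong induction on $k$ for the Fourier amplitudes, then Parseval to pass to $\norm{\delta^{k}}_{2}$---is exactly the route the paper takes, and your base case $\vert d_{1}\vert\le\vert d_{0}\vert$ via $\vert\nu/\mu\vert\le 1$ coincides with theirs. The inductive step, however, does not close as written, and the mechanism the paper uses there is different from yours.

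There are two linked problems. First, in the memory sum $\sum_{s=1}^{k}(d_{s+1}-d_{s})E_{k,s,\tau}$ the term $s=k$ contains $d_{k+1}$ itself, so when you pass to $\sum_{s=1}^{k}(\vert d_{s+1}\vert+\vert d_{s}\vert)\vert E_{k,s,\tau}\vert\le 2\vert d_{0}\vert\sum_{s}\vert E_{k,s,\tau}\vert$ you are invoking the bound $\vert d_{k+1}\vert\le\vert d_{0}\vert$ that you are trying to prove. Second, even ignoring this circularity, your displayed inequality reads $\vert d_{k+1}\vert\le\vert d_{0}\vert\big(1+\tfrac{2}{\vert\mu\vert}\sum_{s}\vert E_{k,s,\tau}\vert\big)$, which is $\ge\vert d_{0}\vert$ for every nontrivial wave number; the parenthetical claim that the telescoped sum is ``absorbed into $\vert\mu\vert$'' cannot hold, because $\mu$ is built only from the spatial symbols $G_{\alpha}\vartheta(r_{j})\sin(\xi T)$ and $\Phi$ and contains no $E_{k,s,\tau}$ whatsoever.

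What the paper does instead is to \emph{isolate} the $s=k$ term, write it as $(d_{k+1}-d_{k})E_{k,k,\tau}$ with $E_{k,k,\tau}=\tfrac{B(\alpha)}{\alpha\tau}\big(1-e^{-\lambda\tau}\big)$, and move the $d_{k+1}$ piece to the left-hand side. After using $\vert\nu/\mu\vert\le 1$ and the hypothesis $\vert d_{k}\vert\le\vert d_{0}\vert$, this produces a factor of the form
\[
\vert d_{k+1}\vert \;\le\; \left\lvert \frac{1-\tfrac{B(\alpha)}{\alpha\tau}\big(1-e^{-\lambda\tau}\big)}{\,1+\tfrac{B(\alpha)}{\alpha\tau}\big(1-e^{-\lambda\tau}\big)\,}\right\rvert \vert d_{0}\vert \;\le\; \vert d_{0}\vert,
\]
which is how the paper obtains a genuine contraction rather than an estimate of the form $\vert d_{0}\vert(1+\text{positive})$. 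If you want your argument to go through, you need this ``move $d_{k+1}$ to the left'' step (or an equivalent rearrangement) in place of the triangle-inequality bound on the full memory sum; the telescoping observation about $\sum_{s}\vert E_{k,s,\tau}\vert$ is correct but is not the device that closes the induction here.
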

\begin{proof}
To give a sketch of the proof of the Theorem \ref{Propos_stability}, we use mathematical induction. To do this, we consider Equation~\eqref{final_discrete} \medskip

For $k = 0$,
\begin{align}\label{final_discrete}
&d_{1}  \bigg[iG_{\alpha}\vartheta(r_{j})e^{ij\xi T} \sin(\xi T) 
+ \bigg( \sum_{l=1}^{j} e^{il\xi T} \cos(\xi T) F_{k,j,l} -  \sum_{l = 1}^{j} e^{il\xi T} F_{k,j,l} \bigg) \bigg]\nonumber \\
& = d_{0}\bigg[ - iG_{\alpha}\vartheta(r_{j})e^{ij\xi T} \sin(\xi T) + \bigg( \sum_{l=1}^{j} e^{il\xi T}\cos(\xi T) F_{k,j,l} -  \sum_{l = 1}^{j} e^{il\xi T} F_{k,j,l} \bigg) \bigg] \\
& - e^{ij\xi T} \sum_{s = 1}^{k} \big( d_{s+1} - d_{s}\big)E_{k,s,\tau}  = 0, \nonumber 
\end{align}
then
\begin{align*}
\vert d_{1} \vert &= \abs*{ \frac{\bigg( - iG_{\alpha}\vartheta(r_{j})e^{ij\xi T} \sin(\xi T) + \bigg( \sum_{l=1}^{j} e^{il\xi T}\cos(\xi T) F_{k,j,l} -  \sum_{l = 1}^{j} e^{il\xi T} F_{k,j,l} \bigg)\bigg)}{\bigg( iG_{\alpha}\vartheta(r_{j})e^{ij\xi T} \sin(\xi T) 
+ \bigg( \sum_{l=1}^{j} e^{il\xi T} \cos(\xi T) F_{k,j,l} -  \sum_{l = 1}^{j} e^{il\xi T} F_{k,j,l} \bigg) \bigg)}} \vert d_{0} \vert \\
&\leq \vert d_{0} \vert.
\end{align*}
Hence $\vert d_{1} \vert \leq \vert d_{0} \vert$ and from the Parseval equality~\eqref{Parseval_equality} and Equation~\eqref{Parseval_equality_extension} we have 
\[
\norm{\delta^{1}}_{2} \leq \norm{\delta^{0}}_{2}.
\]
We now assume that $\vert d_{m} \vert \leq \vert d_{0} \vert$, $m = 1,2,\dots,k$. We shall prove this for all values $m = k+1$.\medskip

Let $\abs*{\frac{\chi}{\tilde{\chi}}}\vert d_{0} \vert = \vert d_{1} \vert$, then 
\begin{align*}
\vert d_{k+1} \vert & \leq \abs*{\frac{\chi}{\tilde{\chi}}}\vert d_{0} \vert + \abs*{\frac{ e^{ij\xi T} \sum_{s = 1}^{k} \big( d_{s+1} - d_{s}\big)E_{k,s,\tau}}{\tilde{\chi}}} \\
\vert d_{k+1} \vert &\leq - \frac{B(\alpha)}{\alpha \tau} \big(1 - e^{-\lambda \tau} \big) \vert d_{k+1} \vert + \abs*{\frac{\chi}{\tilde{\chi}}}\vert d_{0} \vert  - \frac{B(\alpha)}{\alpha \tau} \big(1 - e^{-\lambda \tau} \big) \vert d_{k} \vert \\
 \vert d_{k+1} \vert  & \leq \abs*{\frac{1 - \frac{B(\alpha)}{\alpha \tau} \big(1 - e^{-\lambda \tau} \big) }{1 + \frac{B(\alpha)}{\alpha \tau} \big(1 - e^{-\lambda \tau} \big) }} \vert d_{0} \vert \\
 & \leq \vert d_{0} \vert.
\end{align*}
So from the Parseval equality~\eqref{Parseval_equality} and Equation~\eqref{Parseval_equality_extension} we have 
\[
\norm{\delta^{k}}_{2} \leq \norm{\delta^{0}}_{2}, \qquad k = 0,1,2,\dots, N.
\]
Thus the finite difference scheme given by Equation~\eqref{discrete_full_version} is stable.
\end{proof}

\section{Conclusion}\label{Sec:5}
We have presented a framework for existence and uniqueness of solutions of the problem of groundwater flow in a confined aquifer, within the concept of derivative with fractional order. We used the Fixed-Point Theorem as method to prove the existence, of the exact solution. We also succeed to show that, the modified groundwater flow equation obtained by changing the local time and space derivative by the Caputo-Fabrizio derivative, has an unique solution. The Crank-Nicolson scheme has been used to discretize our problem. In the light of this result, in order to show the efficiency of this new model, numerical analysis of stability of the solution and numerical simulations were presented for different values of $\alpha$. 


\section*{Acknowledgments}
The first author is indebted to the AIMS-Cameroon 2015--2016  tutor fellowship.\medskip


\section*{Author Contributions}

Each of the authors, J.D. Djida, A. Atangana, contributed to each part of this study equally and read and approved the final version of the manuscript.


\section*{Conflicts of Interest}

The authors declare no conflict of interest.


\end{document}